\pgfplotsset{
	compat=newest, 
	cycle list name=exotic }
\DeclareMathOperator*{\argmin}{arg\,min}
\pgfplotsset{compat=1.18}
\newtheorem{theorem}{Theorem}
\begin{document}
\title{Wasserstein projection estimators for circular distributions}
	
	\author{Naoki Otani\thanks{Department of Mathematical Informatics, The University of Tokyo}, \ 
		Takeru Matsuda\thanks{Department of Mathematical Informatics, The University of Tokyo \& Statistical Mathematics Unit, RIKEN Center for Brain Science, e-mail: \texttt{matsuda@mist.i.u-tokyo.ac.jp}}}
	
	\date{}
	
	\maketitle
	
	\begin{abstract}
	For statistical models on circles, we investigate performance of estimators defined as the projections of the empirical distribution with respect to the Wasserstein distance.
We develop algorithms for computing the Wasserstein projection estimators based on a formula of the Wasserstein distances on circles.
Numerical results on the von Mises, wrapped Cauchy, and sine-skewed von Mises distributions show that the Wasserstein projection estimators attain estimation accuracy comparable to the maximum likelihood estimator.
In addition, the Wasserstein projection estimators are found to be robust against noise contamination.
	\end{abstract}

	\section{Introduction}
Let $S$ be a metric space with distance $d$.
For $p \geq 1$, the $L^p$ Wasserstein distance between two probability distributions $p_1$ and $p_2$ on $S$ is defined by
\[
W_p (p_1,p_2) = \inf_{X,Y} \ {\rm E} [ d(X,Y)^p ] ^{1/p},
\]
where the infimum is taken over all joint distributions (coupling) of $(X,Y)$ with marginal distributions of $X$ and $Y$ equal to $p_1$ and $p_2$, respectively.
The Wasserstein distance can be interpreted as the optimal transportation cost from $p_1$ to $p_2$ \cite{Villani2009}.
Unlike other dispersion measures between probability distributions such as the Kullback--Leibler divergence and the $L^p$ distance, the Wasserstein distance has the characteristic that it inherits the metric structure of the underlying space.
By exploiting this property, the Wasserstein distance has been widely used in machine learning and computer vision \citep{PC2019}.

Several studies have investigated the application of the Wasserstein distance to parameter estimation of statistical models \citep{BBR2006,BJGR2019,FZMAP2015,MMC2015}.
Specifically, suppose that we have $n$ independent samples $X_1,\dots,X_n$ from $p(x \mid \theta)$ and estimate $\theta$ based on them.
Then, the $L^p$ Wasserstein projection estimator is defined as %the minimizer of the Wasserstein distance from the empirical distribution:
\[
\hat{\theta}_{W,p} = \argmin_{\theta} W_p (\hat{q}, p_{\theta}),
\]
where $\hat{q}$ is the empirical distribution of $X_1,\dots,X_n$.
It can be interpreted as the projection of the empirical distribution onto the model with respect to the Wasserstein distance.
Note that the maximum likelihood estimator (MLE) can be interpreted as the projection of the empirical distribution with respect to the Kullback--Leibler divergence.

Recently, \cite{AM2022} studied the $L^2$ Wasserstein projection estimator on the real line ($S=\mathbb{R}$).
For location-scale families, they showed that the Wasserstein projection estimator is given by a linear combination of the order statistics, which is often called the L-statistic \cite{vV}, and derived its asymptotic distribution.
Notably, the $L^2$ Wasserstein projection estimator attains statistical efficiency in Gaussian case.
These results are based on the fact that the optimal transport on the real line is explicitly obtained by using the cumulative distribution function.
Thus, it cannot be extended to other spaces straightforwardly.

In this study, we investigate the Wasserstein projection estimator for statistical models on circles ($S=\mathbb{S}^1$).
Circular data appear in many fields such as meteorology (e.g. wind direction), ecology (e.g. phenological pattern), and neuroscience (e.g. neural oscillation phase).
Methods for analyzing such circular data have been widely studied in directional statistics \cite{Mardia} and many parametric models on circles have been proposed such as von Mises, wrapped Cauchy and sine-skewed von Mises distributions \cite{skewed}.
Based on a formula for the Wasserstein distance on circles \cite{Delon,Rabin}, we develop algorithms for computing the Wasserstein projection estimator on circles.
Then, we conduct numerical experiments on von Mises, wrapped Cauchy, and sine-skewed von Mises distributions.
The Wasserstein projection estimators attain estimation accuracy comparable to the maximum likelihood estimator.
In addition, the Wasserstein projection estimators are found to be robust against noise contamination.
Simulation under model mis-specification showed different properties of the Wasserstein projection estimators and the maximum likelihood estimator.

\section{Wasserstein projection estimators on circles}
In this section, we introduce the Wasserstein projection estimators on circles and develop algorithms for computing them.
We identify the circle $\mathbb{S}^1=\{ (\cos \theta, \sin \theta) \mid 0 \leq \theta < 2\pi \}$ with $[0, 2\pi)$ in the following.
It is endowed with the metric $d(x,y) = \min (|x-y|,2\pi-|x-y|)$ for $x,y \in [0, 2\pi)$.

\subsection{Wasserstein distance on circles}
We first review the formula for the Wasserstein distance on circles by \cite{Delon,Rabin}.
Following \cite{Rabin}, we define the cumulative distribution function $Q$ of a probability distribution $q$ on the circle by 
\[
Q(x) = \int_0^x q(z) {\rm d} z
\]
for $0 \leq x < 2 \pi$ and $Q(x \pm 2\pi) = Q(x) \pm 1$ otherwise.
Note that $Q$ is a non-decreasing function on $\mathbb{R}$.
Then, the $L^p$ Wasserstein distance on the circle is given by
\[
W_p (p_1,p_2) = \left( \min_{\alpha \in {R}} \int_0^1 |P_1^{-1} (u) - P_{2,\alpha}^{-1} (u)|^p {\rm d} u \right)^{1/p},
\]
where $P_{2,\alpha}(x)=P_2(x)-\alpha$ \cite{Rabin}.
Since the objective function in the right-hand side is convex with respect to $\alpha$, computation of $W_p (p_1,p_2)$ is reduced to one-dimensional convex optimization, which can be solved by binary search.
For $p=1$, the above formula can be rewritten as
\[
W_1 (p_1,p_2) = \min_{\alpha \in \mathbb{R}} \int_0^{2 \pi} |P_1 (x) - P_2 (x) - \alpha| {\rm d} x.
\]

\subsection{Algorithm}
Let $p(x \mid \theta)$ be a parametric model on the circle (e.g., von Mises, wrapped Cauchy).
Suppose that we have $n$ independent samples $X_1,\dots,X_n$ from $p(x \mid \theta)$ and estimate $\theta$ based on them.
The $L^p$ Wasserstein projection estimator is defined as %the minimizer of the Wasserstein distance from the empirical distribution:
\[
\hat{\theta}_{W,p} = \argmin_{\theta} W_p (\hat{q}, p_{\theta}),
\]
where $\hat{q}$ is the empirical distribution of $X_1,\dots,X_n$.

Unlike the location-scale families on the real line \citep{AM2022}, it seems difficult to obtain $\hat{\theta}_{W,p}$ in closed form. 
Thus, we compute $\hat{\theta}_{W,p}$ by minimizing $W_p (\hat{q}, p_{\theta})$ with respect to $\theta$ by using Powell's method or differential evolution.
Below we develop two algorithms for computing $W_p (\hat{q}, p_{\theta})$.
Whereas the first one is applicable to general $p \geq 1$, the second one is specialized to $p=1$ but computationally more effiicient.
Note that parallel numerical solvers would make the computation more efficient \cite{Li}.

In the first algorithm ($p \geq 1$), we approximate $p(x \mid \theta)$ by a $n$-point discrete distribution $\hat{p}(x \mid \theta)$ with equal weights $1/n$ on $P^{-1}(k/n \mid \theta)$ for $k=1,\dots,n$.
Since $W_p (\hat{q}, \hat{p}_{\theta})$ is the Wasserstein distance between two discrete distributions, it can be computed in $O(n \log (\varepsilon^{-1}))$ time for sorted inputs \cite{Delon}.
From the triangle inequality, the discretization error is bounded as $|W_p(\hat{q},\hat{p}_{\theta})-W_p(\hat{q},{p}_{\theta})| \leq W_p(\hat{p}_{\theta},{p}_{\theta})$, which converges to zero as $n \to \infty$.

In the second algorithm ($p=1$), we approximate $p(x \mid \theta)$ by a $D$-point discrete distribution $\tilde{p}(x \mid \theta)$ with weights $P^{-1}(k/D)-P^{-1}((k-1)/D)$ on $2\pi \cdot k/D$ for $k=1,\dots,D$.
We also approximate $\hat{q}(x)$ by $\tilde{q}(x)$ similarly.
Note that $W_1(\tilde{q},\tilde{p}_{\theta})$ is efficiently computed by using the formula
\[
W_1 (\tilde{q},\tilde{p}_{\theta} ) = \frac{2 \pi}{D} \sum_{i=1}^{D} \left| \tilde{Q} \left( 2 \pi \frac{i}{D} \right) - \tilde{P} \left( 2 \pi \frac{i}{D} \mid \theta \right) - m \right|,
\]
where $m$ is the median of $\tilde{Q} ( k/D ) - \tilde{P} ( k/D \mid \theta )$ for $k=1,\dots,D$ and obtained by the median of medians algorithm in $O(D)$ time \cite{Hundrieser}. 
From $W_1(\tilde{p}_{\theta},p_{\theta}) \leq 2\pi/D$ and $W_1(\tilde{q},\hat{q}) \leq 2\pi/D$, we have $|W_1(\tilde{q},\tilde{p}_{\theta})-W_1(\hat{q},p_{\theta})| \leq 4 \pi/D$.
We set the grid size to $D = n$.
This choice reflects a trade-off between computational cost and approximation accuracy:
the computational complexity scales as $O(n + n_{\mathrm{feval}} D)$ while the approximation error is bounded above by $O(1/D)$, where $n_{\mathrm{feval}}$ is the number of parameter search iterations.
Under this choice of the grid size, the discretization error vanishes as $n \to \infty$.

Table~\ref{tab} compares the computation time of MLE, $\hat{\theta}_{W,1}$ by the second algorithm, and $\hat{\theta}_{W,2}$ by the first algorithm for $n=10000$.

\begin{table}[htbp]
	\caption{Computation time (in seconds) of estimators for $n=10000$.}
	\label{tab}
	\centering
	\begin{tabular}{|c|c|c|}
		\hline
		& von Mises & wrapped Cauchy  \\ \hline
		MLE & $6.2 \times 10^{-4}$ & $2.6 \times 10^{-3}$  \\ \hline
		W1 & 7.5 & 0.15  \\ \hline
		W2 & 5.6 & 1.8  \\ \hline
	\end{tabular}
\end{table}

\subsection{Consistency}
From the general theory of M-estimation \cite{vV}, the Wasserstein projection estimator is shown to be consistent as follows.

\begin{theorem}
	If $p \geq 1$ and the model is parametric and identifiable, then $\hat{\theta}_{W,p} \to \theta$ as $n \to \infty$.
\end{theorem}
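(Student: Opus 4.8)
The plan is to recognize $\hat{\theta}_{W,p}$ as an M-estimator and invoke the standard argmin-consistency theorem (e.g. Theorem~5.7 of \cite{vV}). Write $\theta_0$ for the true parameter, so that $X_1,\dots,X_n$ are drawn from $p_{\theta_0}$, and set $M_n(\theta) = W_p(\hat{q}, p_\theta)$ and $M(\theta) = W_p(p_{\theta_0}, p_\theta)$. By definition $\hat{\theta}_{W,p}$ minimizes $M_n$, while the limiting criterion $M$ is minimized at $\theta_0$. The two ingredients I need are: (i) $M$ has a well-separated minimum at $\theta_0$, and (ii) $M_n$ converges to $M$ uniformly in $\theta$.

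For the uniform convergence, the metric structure of $W_p$ does almost all the work. Since $W_p$ is a genuine metric on probability distributions, the triangle inequality gives, for every $\theta$,
\[
|M_n(\theta)-M(\theta)| = |W_p(\hat{q},p_\theta)-W_p(p_{\theta_0},p_\theta)| \le W_p(\hat{q}, p_{\theta_0}),
\]
and the right-hand side is free of $\theta$. Hence $\sup_\theta |M_n(\theta)-M(\theta)| \le W_p(\hat{q},p_{\theta_0})$. On the compact space $\mathbb{S}^1$, the $L^p$ Wasserstein distance metrizes weak convergence, and the empirical measure $\hat{q}$ converges weakly to $p_{\theta_0}$ almost surely (Glivenko--Cantelli / Varadarajan). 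Therefore $W_p(\hat{q},p_{\theta_0}) \to 0$ almost surely, and the required uniform convergence follows at once.

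For the well-separated minimum, identifiability yields $M(\theta)=W_p(p_{\theta_0},p_\theta)>0$ for every $\theta \ne \theta_0$, with $M(\theta_0)=0$. To upgrade uniqueness to well-separation, namely $\inf_{d(\theta,\theta_0)\ge\varepsilon} M(\theta) > 0$ for each $\varepsilon>0$, I would assume the parameter space is compact and the map $\theta \mapsto p_\theta$ is continuous in $W_p$; then $M$ is continuous and attains a strictly positive infimum on the closed set $\{\theta : d(\theta,\theta_0)\ge\varepsilon\}$. With (i) and (ii) established, the argmin-consistency theorem delivers $\hat{\theta}_{W,p} \to \theta_0$.

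The main obstacle is the separation/compactness step rather than the convergence step. The uniform convergence is essentially automatic from the triangle inequality, but guaranteeing a well-separated minimum requires controlling the behavior of $p_\theta$ over the entire parameter space, including its boundary. For the families studied here (e.g. the von Mises concentration $\kappa$ ranging over $[0,\infty)$), the natural parameter space is not compact, so one must either compactify it or verify directly that $M(\theta)$ stays bounded away from $0$ as $\theta$ approaches the boundary or escapes to infinity; continuity of $\theta \mapsto p_\theta$ in the Wasserstein metric must also be checked for each model.
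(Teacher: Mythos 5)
Your proof follows the same skeleton as the paper's: both recast $\hat{\theta}_{W,p}$ as an M-estimator, use the triangle inequality $|W_p(\hat{q}, p_\theta) - W_p(p_{\theta_0}, p_\theta)| \le W_p(\hat{q}, p_{\theta_0})$ to get uniform convergence of the criterion (uniformity being free since the bound does not depend on $\theta$), establish a well-separated minimum at $\theta_0$, and invoke Theorem~5.7 of van der Vaart. Two sub-steps differ. First, to show $W_p(\hat{q}, p_{\theta_0}) \to 0$ you use compactness of $\mathbb{S}^1$ (on which $W_p$ metrizes weak convergence) together with almost-sure weak convergence of the empirical measure (Varadarajan), whereas the paper cites the quantitative result of Fournier and Guillin; your route is more elementary and fully sufficient, since no rate is needed. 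Second, and more substantively, you are more careful than the paper at the well-separation step: the paper simply asserts $\inf_{\|\theta - \theta_0\| > \varepsilon} W_p(p_{\theta_0}, p_\theta) > 0$, but identifiability alone gives only the pointwise statement $W_p(p_{\theta_0}, p_\theta) > 0$ for $\theta \ne \theta_0$, not a positive infimum. Your observation that one needs compactness of the parameter space (or direct control of $W_p(p_{\theta_0},p_\theta)$ as $\theta$ approaches the boundary or escapes to infinity) plus $W_p$-continuity of $\theta \mapsto p_\theta$ --- and that this is not automatic for, e.g., the von Mises family with $\kappa \in [0,\infty)$ --- points to a gap in the paper's own argument rather than in yours. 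For the concrete models considered one can verify separation directly (as $\kappa \to \infty$ the von Mises law converges to a point mass, which stays Wasserstein-bounded away from any fixed $p_{\theta_0}$ with finite $\kappa_0$), but this verification is needed and the paper does not supply it.
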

\begin{proof}
	From the triangle inequality,
	\[
	|W_p(\hat{q},p_{\theta}) - W_p(p_{\theta_0},p_{\theta})| \leq W_p(\hat{q},p_{\theta_0})
	\]
	for every $\theta$.
	From \cite{Fournier},
	\[
	W_p(\hat{q},p_{\theta_0}) \to 0
	\]
	as $n \to \infty$.
	Thus,
	\[
	\sup_{\theta} |W_p(\hat{q},p_{\theta}) - W_p(p_{\theta_0},p_{\theta})| \to 0
	\]
	as $n \to \infty$.
	Also, since the model is parametric and  identifiable, for every $\varepsilon>0$,
	\[
	\inf_{\theta: \| \theta-\theta_0 \| > \varepsilon} W_p(p_{\theta_0},p_{\theta}) > 0 = W_p(p_{\theta_0},p_{\theta_0}),
	\]
	Therefore, from Theorem~5.7 of \cite{vV}, $\hat{\theta} \to \theta_0$.
\end{proof}

We examine the performance of the Wasserstein projection estimator numerically in the next section.
It is an interesting future problem to derive the asymptotic distribution theoretically.

\section{Numerical results}
In this section, we present results of numerical experiments.
The python code is available at \url{https://github.com/naoppy/WassersteinProjEstimatorOnS1}.

\subsection{von Mises distribution}
Here, we consider the von Mises distribution defined by
\[
p(x \mid \mu,\kappa) = \frac{1}{2 \pi I_0(\kappa)}\exp (\kappa \cos (\theta-\mu)),
\]
where $\mu \in [0,2\pi)$ is the mean parameter and $\kappa>0$ is the concentration parameter.
Its Fisher information matrix is
\[
I(\mu,\kappa) = \begin{pmatrix} \kappa \frac{I_1(\kappa)}{I_0(\kappa)} & 0 \\ 0 & \frac{1}{2} + \frac{I_2(\kappa)}{2 I_0(\kappa)} - \left( \frac{I_1(\kappa)}{I_0(\kappa)} \right)^2 \end{pmatrix}.
\]
The maximum likelihood estimator is obtained in closed form by using the Bessel function \cite{Mardia}.

Figures~\ref{fig_vM} and \ref{fig_vM2} display the mean squared error (MSE) ratios of the $L^1$ and $L^2$ Wasserstein projection estimators to the maximum likelihood estimator, given by
\[
\frac{{\rm E}[(\hat{\mu}_{W,p}-\mu)^2]}{{\rm E}[(\hat{\mu}_{\mathrm{ML}}-\mu)^2]} \quad \text{and} \quad \frac{{\rm E}[(\hat{\kappa}_{W,p}-\kappa)^2]}{{\rm E}[(\hat{\kappa}_{\mathrm{ML}}-\kappa)^2]}.
\]
Specifically, the ratios are plotted against the sample size $n$ in Figure~\ref{fig_vM}, and against the concentration parameter $\kappa$ in Figure~\ref{fig_vM2}.
Note that the mean squared errors do not depend on $\mu$ due to shift invariance.
The Wasserstein projection estimators attain estimation accuracy comparable to the maximum likelihood estimator.

\citet{AM2022} showed that the $L^2$ Wasserstien projection estimator is efficient for one-dimensional Gaussian distributions.
Figures~\ref{fig_vM} and \ref{fig_vM2} indicate a similar property for the von Mises distribution, which can be viewed as a circular analogue of the one-dimensional Gaussian distribution for large $\kappa$.

\begin{figure}[htbp]
	\centering
	\begin{tikzpicture}
		\tikzset{every node/.style={}}
		\begin{axis}
			[width=6cm,
			xmax=5,xmin=2,
			ymax=1.1,ymin=0.98,
			xlabel={$\log_{10} n$},
			ylabel={MSE ratio},
			yticklabel style={
				/pgf/number format/.cd,
				fixed,
				zerofill,
				precision=3,
			},
			ylabel near ticks,
			legend pos=north east,
			legend style={legend cell align=left,draw=none,fill=white,fill opacity=0.8,text opacity=1,},
			]
			\addplot[color=black, mark=square*, mark options={solid}] table [x=log10N, y expr={pow(10, \thisrow{W1(method2)_mu} - \thisrow{MLE_mu})}] {\vMn};
			\addlegendentry{W1/MLE}
			\addplot[color=black, dashed, mark=*, mark options={solid}] table [x=log10N, y expr={pow(10, \thisrow{W2(method3)_mu} - \thisrow{MLE_mu})}] {\vMn};
			\addlegendentry{W2/MLE}
			\addplot[thin, color=black, dotted,] {1};
		\end{axis}
	\end{tikzpicture}
	\begin{tikzpicture}
		\tikzset{every node/.style={}}
		\begin{axis}
			[width=6cm,
			xmax=5,xmin=2,
			ymax=1.15,ymin=0.98,
			xlabel={$\log_{10} n$},
			yticklabel style={
				/pgf/number format/.cd,
				fixed,
				zerofill,
				precision=3,
			},
			ylabel near ticks,
			legend pos=north east,
			legend style={legend cell align=left,draw=none,fill=white,fill opacity=0.8,text opacity=1,},
			]
			\addplot[color=black, mark=square*, mark options={solid}] table [x=log10N, y expr={pow(10, \thisrow{W1(method2)_kappa} - \thisrow{MLE_kappa})}] {\vMn};
			\addlegendentry{W1/MLE}
			\addplot[color=black, dashed, mark=*, mark options={solid}] table [x=log10N, y expr={pow(10, \thisrow{W2(method3)_kappa} - \thisrow{MLE_kappa})}] {\vMn};
			\addlegendentry{W2/MLE}
			\addplot[color=black, dotted,] {1};
		\end{axis}
	\end{tikzpicture}
	\caption{Ratio of mean squared error of the $L^1$ and $L^2$ Wasserstein projection estimators to that of the maximum likelihood estimator for the von Mises distribution ($\mu=0.3,\kappa=2$). Left: $\mu$, Right: $\kappa$.}
	\label{fig_vM}
\end{figure}

\begin{figure}[htbp]
	\centering
	\begin{tikzpicture}
		\tikzset{every node/.style={}}
		\begin{axis}
			[width=6cm,
			xmode=log,
			xmax=500,xmin=0.5,
			ymax=1.1,
			xlabel={$\kappa$},
			ylabel={MSE ratio},
			yticklabel style={
				/pgf/number format/.cd,
				fixed,
				zerofill,
				precision=3,
			},
			ylabel near ticks,
			legend pos=north east,
			legend style={legend cell align=left,draw=none,fill=white,fill opacity=0.8,text opacity=1,},
			]
			\addplot[color=black, mark=square*, mark options={solid}] table [x=kappa, y=W1(method2)_mu_MLE_mu] {\vMk};
			\addlegendentry{W1/MLE}
			\addplot[color=black, dashed, mark=*, mark options={solid}] table [x=kappa, y=W2(method3)_mu_MLE_mu] {\vMk};
			\addlegendentry{W2/MLE}
			\addplot[thin, color=black, dotted, domain=0.5:500, samples=500] {1};
		\end{axis}
	\end{tikzpicture}
	\begin{tikzpicture}
		\tikzset{every node/.style={}}
		\begin{axis}
			[width=6cm,
			xmode=log,
			xmax=500,xmin=0.5,
			ymax=1.4,
			xlabel={$\kappa$},
			yticklabel style={
				/pgf/number format/.cd,
				fixed,
				zerofill,
				precision=3,
			},
			ylabel near ticks,
			legend pos=north east,
			legend style={legend cell align=left,draw=none,fill=white,fill opacity=0.8,text opacity=1,},
			]
			\addplot[color=black, mark=square*, mark options={solid}] table [x=kappa, y=W1(method2)_kappa_MLE_kappa] {\vMk};
			\addlegendentry{W1/MLE}
			\addplot[color=black, dashed, mark=*, mark options={solid}] table [x=kappa, y=W2(method3)_kappa_MLE_kappa] {\vMk};
			\addlegendentry{W2/MLE}
			\addplot[color=black, dotted, domain=0.5:500] {1};
		\end{axis}
	\end{tikzpicture}
	\caption{Ratio of mean squared error of the $L^1$ and $L^2$ Wasserstein projection estimators to that of the maximum likelihood estimator for the von Mises distribution ($\mu=0.3,n=10^5$). Left: $\mu$, Right: $\kappa$.}
	\label{fig_vM2}
\end{figure}

\subsection{Wrapped Cauchy distribution}
Here, we consider the wrapped Cauchy distribution defined by
\[
p(x \mid \mu,\rho) = \frac{1}{2\pi} \frac{1-\rho^2}{1+\rho^2-2 \rho \cos(x-\mu)},
\]
where $\mu \in [0, 2\pi)$ is the mean paramter and $\rho \in [0,1)$ is the concentration parameter.
Its Fisher information matrix is
\[
I(\mu,\rho) = \begin{pmatrix} \frac{2 \rho^2}{(1-\rho^2)^2} & 0 \\ 0 & \frac{2}{(1-\rho^2)^2} \end{pmatrix}.
\]
Several methods have been developed for computing the maximum likelihood estimator \cite{Kent,McCullagh,Okamura}.

Figures~\ref{fig_wC} and \ref{fig_wC2} display the mean squared error (MSE) ratios of the $L^1$ and $L^2$ Wasserstein projection estimators to the maximum likelihood estimator, given by
\[
\frac{{\rm E}[(\hat{\mu}_{W,p}-\mu)^2]}{{\rm E}[(\hat{\mu}_{\mathrm{ML}}-\mu)^2]} \quad \text{and} \quad \frac{{\rm E}[(\hat{\rho}_{W,p}-\rho)^2]}{{\rm E}[(\hat{\rho}_{\mathrm{ML}}-\rho)^2]}.
\]
Specifically, the ratios are plotted against the sample size $n$ in Figure~\ref{fig_wC}, and against the concentration parameter $\rho$ in Figure~\ref{fig_wC2}.
Note that the mean squared errors do not depend on $\mu$ due to shift invariance.
Again, the Wasserstein projection estimators attain estimation accuracy comparable to the maximum likelihood estimator, except when $\rho$ is very close to one.

\begin{figure}[htbp]
	\centering
	\begin{tikzpicture}
		\tikzset{every node/.style={}}
		\begin{axis}
			[width=6cm,
			xmax=5,xmin=2,
			ymax=1.5,ymin=0.9,
			xlabel={$\log_{10} n$},
			ylabel={MSE ratio},
			yticklabel style={
				/pgf/number format/.cd,
				fixed,
				zerofill,
				precision=3,
			},
			ylabel near ticks,
			legend pos=north east,
			legend style={legend cell align=left,draw=none,fill=white,fill opacity=0.8,text opacity=1,},
			]
			\addplot[color=black, mark=square*, mark options={solid}] table [x=log10N, y expr={pow(10, \thisrow{W1(method2)_mu} - \thisrow{MLE_mu})}] {\wCn};
			\addlegendentry{W1/MLE}
			\addplot[color=black, dashed, mark=*, mark options={solid}] table [x=log10N, y expr={pow(10, \thisrow{W2(method3)_mu} - \thisrow{MLE_mu})}] {\wCn};
			\addlegendentry{W2/MLE}
			\addplot[thin, color=black, dotted,] {1};
		\end{axis}
	\end{tikzpicture}
	\begin{tikzpicture}
		\tikzset{every node/.style={}}
		\begin{axis}
			[width=6cm,
			xmax=5,xmin=2,
			ymax=1.5,ymin=0.9,
			xlabel={$\log_{10} n$},
			yticklabel style={
				/pgf/number format/.cd,
				fixed,
				zerofill,
				precision=3,
			},
			ylabel near ticks,
			legend pos=north east,
			legend style={legend cell align=left,draw=none,fill=white,fill opacity=0.8,text opacity=1,},
			]
			\addplot[color=black, mark=square*, mark options={solid}] table [x=log10N, y expr={pow(10, \thisrow{W1(method2)_rho} - \thisrow{MLE_rho})}] {\wCn};
			\addlegendentry{W1/MLE}
			\addplot[color=black, dashed, mark=*, mark options={solid}] table [x=log10N, y expr={pow(10, \thisrow{W2(method3)_rho} - \thisrow{MLE_rho})}] {\wCn};
			\addlegendentry{W2/MLE}
			\addplot[color=black, dotted,] {1};
		\end{axis}
	\end{tikzpicture}
	\caption{Ratio of mean squared error of the $L^1$ and $L^2$ Wasserstein projection estimators to that of the maximum likelihood estimator for the wrapped Cauchy distribution ($\mu=\pi/8,\rho=0.4$). Left: $\mu$, Right: $\rho$.}
	\label{fig_wC}
\end{figure}
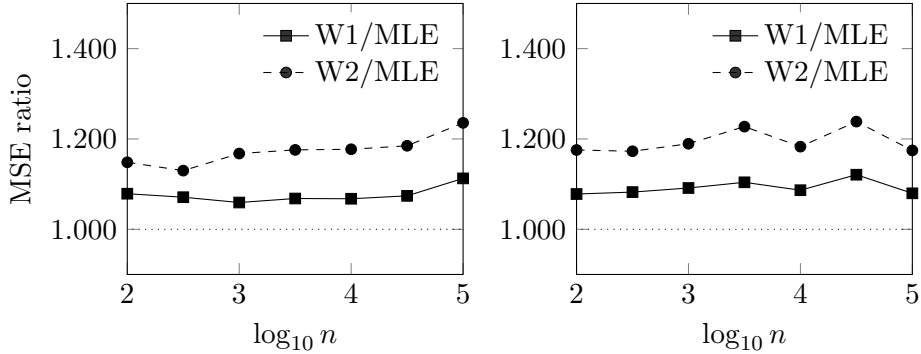

\begin{figure}[htbp]
	\centering
	\begin{tikzpicture}
		\tikzset{every node/.style={}}
		\begin{axis}
			[width=6cm,
			xmax=1,xmin=0,
			xlabel={$\rho$},
			ylabel={MSE ratio},
			yticklabel style={
				/pgf/number format/.cd,
				fixed,
				zerofill,
				precision=3,
			},
			ylabel near ticks,
			legend pos=north east,
			legend style={legend cell align=left,draw=none,fill=white,fill opacity=0.8,text opacity=1,},
			]
			\addplot[color=black, mark=square*, mark options={solid}] table [x=rho, y =W1(method2)_mu_MLE_mu] {\wCrho};
			\addlegendentry{W1/MLE}
			\addplot[color=black, dashed, mark=*, mark options={solid}] table [x=rho, y=W2(method3)_mu_MLE_mu] {\wCrho};
			\addlegendentry{W2/MLE}
			\addplot[thin, color=black, dotted,] {1};
		\end{axis}
	\end{tikzpicture}
	\begin{tikzpicture}
		\tikzset{every node/.style={}}
		\begin{axis}
			[width=6cm,
			xmax=1,xmin=0,
			xlabel={$\rho$},
			yticklabel style={
				/pgf/number format/.cd,
				fixed,
				zerofill,
				precision=3,
			},
			ylabel near ticks,
			legend pos=north east,
			legend style={legend cell align=left,draw=none,fill=white,fill opacity=0.8,text opacity=1,},
			]
			\addplot[color=black, mark=square*, mark options={solid}] table [x=rho, y=W1(method2)_rho_MLE_rho] {\wCrho};
			\addlegendentry{W1/MLE}
			\addplot[color=black, dashed, mark=*, mark options={solid}] table [x=rho, y=W2(method3)_rho_MLE_rho] {\wCrho};
			\addlegendentry{W2/MLE}
			\addplot[color=black, dotted,] {1};
		\end{axis}
	\end{tikzpicture}
	\caption{Ratio of mean squared error of the $L^1$ and $L^2$ Wasserstein projection estimators to that of the maximum likelihood estimator for the wrapped Cauchy distribution ($\mu=\pi/8,n=10^5$). Left: $\mu$, Right: $\rho$.}
	\label{fig_wC2}
\end{figure}

\subsection{Sine-skewed von Mises distribution}
Here, we consider the sine-skewed von Mises distribution \cite{skewed} defined by
\[
p(x \mid \mu, \kappa, \lambda) = \frac{1}{2\pi I_{0}(\kappa)} \exp (\kappa \cos(x - \mu) ) (1 + \lambda \sin(x - \mu)),
\]
where $\mu \in [0,2\pi)$ is the mean parameter, $\kappa>0$ is the concentration parameter and $\lambda \in [-1,1]$ is the skewness parameter.
Its Fisher information is
\begin{align*}
	I_{\mu \mu} (\mu, \kappa, \lambda) &= \kappa \dfrac{I_{1}(\kappa)}{I_{0}(\kappa)}
	+ \dfrac{\lambda}{2\pi I_{0}(\kappa)} \int_{-\pi}^{\pi} \exp (\kappa \cos x)
	\frac{\lambda + \sin x}{1 + \lambda \sin x} {\rm d} x, \\
	I_{\mu \kappa} (\mu, \kappa, \lambda) &=  \frac{\lambda}{2} \left(\frac{I_{2}(\kappa)}{I_{0}(\kappa)} - 1 \right), \\
	I_{\mu \lambda} (\mu, \kappa, \lambda) &= \frac{1}{2\pi I_{0}(\kappa)} \int_{-\pi}^{\pi} \exp (\kappa \cos x)
	\frac{\cos x}{1+\lambda \sin x} {\rm d} x, \\
	I_{\kappa \kappa} (\mu, \kappa, \lambda) &= \frac{1}{2} + \frac{I_{2}(\kappa)}{2 I_{0}(\kappa)} - 
	\left(\frac{I_{1}(\kappa)}{I_{0}(\kappa)}\right)^{2}, \\
	I_{\kappa \lambda} (\mu, \kappa, \lambda) &=  0, \\
	I_{\lambda \lambda} (\mu, \kappa, \lambda) &= \frac{1}{2\pi I_{0}(\kappa)} \int_{-\pi}^{\pi} \exp (\kappa \cos x)
	\frac{\sin^{2} x}{1+\lambda \sin x} {\rm d} x.
\end{align*}

Figures~\ref{fig_skew} and \ref{fig_skew2} display the mean squared error (MSE) ratios of the $L^1$ Wasserstein projection estimators to the maximum likelihood estimator, given by
\[
\frac{{\rm E}[(\hat{\mu}_{W,p}-\mu)^2]}{{\rm E}[(\hat{\mu}_{\mathrm{ML}}-\mu)^2]}, \quad \frac{{\rm E}[(\hat{\kappa}_{W,p}-\kappa)^2]}{{\rm E}[(\hat{\kappa}_{\mathrm{ML}}-\kappa)^2]}, \quad \text{and} \quad \frac{{\rm E}[(\hat{\lambda}_{W,p}-\lambda)^2]}{{\rm E}[(\hat{\lambda}_{\mathrm{ML}}-\lambda)^2]}.
\]
Specifically, the ratios are plotted against the sample size $n$ in Figure~\ref{fig_skew}, and against the skewness parameter $\lambda$ in Figure~\ref{fig_skew2}.
Note that the mean squared errors do not depend on $\mu$ due to shift invariance.
The mean squared error of the Wasserstein projection estimators is at most three times that of the maximum likelihood estimator.

\begin{figure}[htbp]
	\centering
	\begin{tikzpicture}
		\tikzset{every node/.style={}}
		\begin{axis}
			[
			xmax=5,xmin=2,ymax=4,
			width=6cm,
			xlabel={$\log_{10} n$},
			ylabel={MSE ratio},
			yticklabel style={
				/pgf/number format/.cd,
				fixed,
				zerofill,
				precision=3,
			},
			ylabel near ticks,
			legend pos=north east,
			legend style={legend cell align=left,draw=none,fill=white,fill opacity=0.8,text opacity=1,},
			]
			\addplot[color=black, mark=square*, mark options={solid}] table [x=log10N, y expr={pow(10, \thisrow{W1(method2)_mu} - \thisrow{MLE_mu})}] {\svMn};
			\addlegendentry{W1/MLE}
			\addplot[thin, color=black, dotted,] {1};
		\end{axis}
	\end{tikzpicture} 
	\begin{tikzpicture}
		\tikzset{every node/.style={}}
		\begin{axis}
			[
			xmax=5,xmin=2,ymax=3,
			width=6cm,
			xlabel={$\log_{10} n$},
			yticklabel style={
				/pgf/number format/.cd,
				fixed,
				zerofill,
				precision=3,
			},
			ylabel near ticks,
			legend pos=north east,
			legend style={legend cell align=left,draw=none,fill=white,fill opacity=0.8,text opacity=1,},
			]
			\addplot[color=black, mark=square*, mark options={solid}] table [x=log10N, y expr={pow(10, \thisrow{W1(method2)_kappa} - \thisrow{MLE_kappa})}] {\svMn};
			\addlegendentry{W1/MLE}
			\addplot[color=black, dotted,] {1};
		\end{axis}
	\end{tikzpicture}
	\begin{tikzpicture}
		\tikzset{every node/.style={}}
		\begin{axis}
			[
			xmax=5,xmin=2,ymax=5,
			width=6cm,
			xlabel={$\log_{10} n$},
			yticklabel style={
				/pgf/number format/.cd,
				fixed,
				zerofill,
				precision=3,
			},
			ylabel near ticks,
			legend pos=north east,
			legend style={legend cell align=left,draw=none,fill=white,fill opacity=0.8,text opacity=1,},
			]
			\addplot[color=black, mark=square*, mark options={solid}] table [x=log10N, y expr={pow(10, \thisrow{W1(method2)_lambda} - \thisrow{MLE_lambda})}] {\svMn};
			\addlegendentry{W1/MLE}
			\addplot[color=black, dotted,] {1};
		\end{axis}
	\end{tikzpicture} 
	\caption{Mean squared error of the $L^1$ Wasserstein projection estimator and maximum likelihood estimator for the sine-skewed von Mises distribution ($\mu=0,\kappa=1,\lambda=0.7$). Upper left: $\mu$, Upper right: $\kappa$, Lower: $\lambda$.}
	\label{fig_skew}
\end{figure}

\begin{figure}[htbp]
	\centering
	\begin{tikzpicture}
		\tikzset{every node/.style={}}
		\begin{axis}
			[
			xmax=1,xmin=-1,
			width=6cm,
			xlabel={$\lambda$},
			ylabel={MSE ratio},
			yticklabel style={
				/pgf/number format/.cd,
				fixed,
				zerofill,
				precision=3,
			},
			ylabel near ticks,
			legend pos=north east,
			legend style={legend cell align=left,draw=none,fill=white,fill opacity=0.8,text opacity=1,},
			]
			\addplot[color=black, mark=square*, mark options={solid}] table [x=lambda, y expr={pow(10, \thisrow{W1(method2)_mu} - \thisrow{MLE_mu})}] {\svMlambda};
			\addlegendentry{W1/MLE}
			\addplot[thin, color=black, dotted,] {1};
		\end{axis}
	\end{tikzpicture}
	\begin{tikzpicture}
		\tikzset{every node/.style={}}
		\begin{axis}
			[
			xmax=1,xmin=-1,
			width=6cm,
			xlabel={$\lambda$},
			yticklabel style={
				/pgf/number format/.cd,
				fixed,
				zerofill,
				precision=3,
			},
			ylabel near ticks,
			legend pos=north east,
			legend style={legend cell align=left,draw=none,fill=white,fill opacity=0.8,text opacity=1,},
			]
			\addplot[color=black, mark=square*, mark options={solid}] table [x=lambda, y expr={pow(10, \thisrow{W1(method2)_kappa} - \thisrow{MLE_kappa})}] {\svMlambda};
			\addlegendentry{W1/MLE}
			\addplot[color=black, dotted,] {1};
		\end{axis}
	\end{tikzpicture}
	\begin{tikzpicture}
		\tikzset{every node/.style={}}
		\begin{axis}
			[
			xmax=1,xmin=-1,
			width=6cm,
			xlabel={$\lambda$},
			ylabel={MSE ratio},
			yticklabel style={
				/pgf/number format/.cd,
				fixed,
				zerofill,
				precision=3,
			},
			ylabel near ticks,
			legend pos=north east,
			legend style={legend cell align=left,draw=none,fill=white,fill opacity=0.8,text opacity=1,},
			]
			\addplot[color=black, mark=square*, mark options={solid}] table [x=lambda, y expr={pow(10, \thisrow{W1(method2)_lambda} - \thisrow{MLE_lambda})}] {\svMlambda};
			\addlegendentry{W1/MLE}
			\addplot[color=black, dotted,] {1};
		\end{axis}
	\end{tikzpicture}
	\caption{Mean squared error of the $L^1$ Wasserstein projection estimator and maximum likelihood estimator for the sine-skewed von Mises distribution ($\mu=0,\kappa=1,n=10^5$). Upper left: $\mu$, Upper right: $\kappa$, Lower: $\lambda$.}
	\label{fig_skew2}
\end{figure}

\subsection{Robustness against noise contamination}\label{sec_contam}
Here, we study the robustness of the Wasserstein projection estimators against noise contamination.
We generated samples from the von Mises distribution with uniform noise contamination: 
\[
p(x \mid \mu,\kappa,\varepsilon) = (1-\varepsilon) \frac{1}{2 \pi I_0(\kappa)}\exp (\kappa \cos (\theta-\mu)) + \varepsilon \frac{1}{2 \pi},
\]
where $\varepsilon \in [0,1]$ is the noise contamination ratio.
Namely, each sample comes from the uniform distribution on $[0,2\pi)$ with probability $\varepsilon$.
In this setting, we consider fitting the von Mises distribution (without noise contamination) and compare the performance of the Wasserstein projection estimators and the maximum likelihood estimator.

Figures~\ref{fig_robust} and \ref{fig_robust2} plot the ratio of the mean squared errors of the Wasserstein projection estimators to that of the maximum likelihood estimator with respect to $n$ and $\rho$, respectively.
In Figure~\ref{fig_robust}, we also plot two estimators defined by projection with respect to robust divergences \cite{Kato} \footnote{We set the hyperparameters based on the numerical results reported in \cite{Kato}. Therefore, the resulting estimators should be regarded as oracle estimators and are not applicable in practice. Note that the Wasserstein estimators do not require such a hyperparameter tuning.}.
Note that the mean squared errors do not depend on $\mu$ due to shift invariance.
In this case, the maximum likelihood estimator has negative bias in estimating $\kappa$ due to noise, which leads to limited estimation accuracy.
On the other hand, the $L^1$ Wasserstein projection estimator attains better estimation accuracy for $\kappa$ than the maximum likelihood estimator.
This result implies the robustness of the $L^1$ Wasserstein projection estimator to noise contamination, which is compatible to the usage of $l_1$ loss in robust statistics \cite{Huber}.

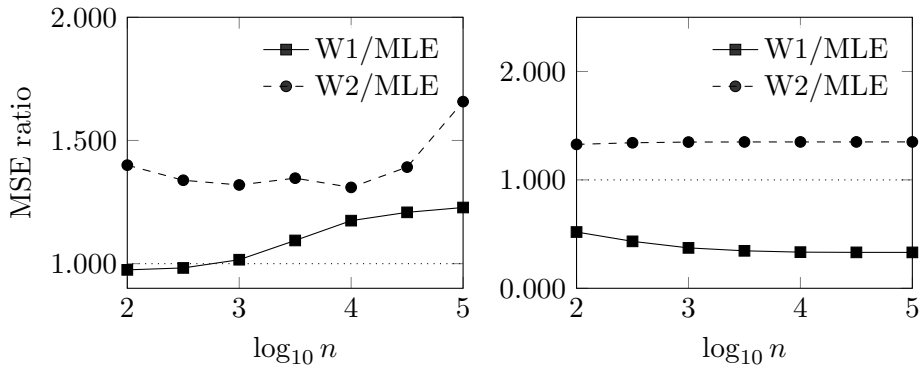
\begin{figure}[htbp]
	\centering
	\begin{tikzpicture}
		\tikzset{every node/.style={}}
		\begin{axis}
			[width=6cm,
			xmax=5,xmin=2,
			ymax=3,ymin=0.7,
			xlabel={$\log_{10} n$},
			ylabel={MSE ratio},
			yticklabel style={
				/pgf/number format/.cd,
				fixed,
				zerofill,
				precision=3,
			},
			ylabel near ticks,
			legend pos=north east,
			legend style={legend cell align=left,draw=none,fill=white,fill opacity=0.8,text opacity=1,},
			]
			\addplot[color=black, mark=square*, mark options={solid}] table [x=log10N, y expr={pow(10, \thisrow{W1(method2)_mu} - \thisrow{MLE_mu})}] {\robust};
			\addlegendentry{W1/MLE}
			\addplot[color=black, dashed, mark=*, mark options={solid}] table [x=log10N, y expr={pow(10, \thisrow{W2(method3)_mu} - \thisrow{MLE_mu})}] {\robust};
			\addlegendentry{W2/MLE}
			\addplot[color=black, dotted, mark=triangle*, mark options={solid}] table [x=log10N, y expr={pow(10, \thisrow{type1_beta05_mu} - \thisrow{MLE_mu})}] {\robust};
			\addlegendentry{beta/MLE}
			\addplot[color=black, dashdotted, mark=diamond*, mark options={solid}] table [x=log10N, y expr={pow(10, \thisrow{type0_gamma05_mu} - \thisrow{MLE_mu})}] {\robust};
			\addlegendentry{gamma/MLE}
			\addplot[thin, color=black, dotted,] {1};
		\end{axis}
	\end{tikzpicture}
	\begin{tikzpicture}
		\tikzset{every node/.style={}}
		\begin{axis}
			[width=6cm,
			xmax=5,xmin=2,
			ymax=5,ymin=0,
			xlabel={$\log_{10} n$},
			yticklabel style={
				/pgf/number format/.cd,
				fixed,
				zerofill,
				precision=3,
			},
			ylabel near ticks,
			legend pos=north east,
			legend style={legend cell align=left,draw=none,fill=white,fill opacity=0.8,text opacity=1,},
			]
			\addplot[color=black, mark=square*, mark options={solid}] table [x=log10N, y expr={pow(10, \thisrow{W1(method2)_kappa} - \thisrow{MLE_kappa})}] {\robust};
			\addlegendentry{W1/MLE}
			\addplot[color=black, dashed, mark=*, mark options={solid}] table [x=log10N, y expr={pow(10, \thisrow{W2(method3)_kappa} - \thisrow{MLE_kappa})}] {\robust};
			\addlegendentry{W2/MLE}
			\addplot[color=black, dashdotted, mark=triangle*, mark options={solid}] table [x=log10N, y expr={pow(10, \thisrow{type1_beta05_kappa} - \thisrow{MLE_kappa})}] {\robust};
			\addlegendentry{beta/MLE}
			\addplot[color=black, dotted, mark=diamond*, mark options={solid}] table [x=log10N, y expr={pow(10, \thisrow{type0_gamma05_kappa} - \thisrow{MLE_kappa})}] {\robust};
			\addlegendentry{gamma/MLE}
			\addplot[color=black, dotted,] {1};
		\end{axis}
	\end{tikzpicture}
	\caption{Ratio of mean squared error of the $L^1$ and $L^2$ Wasserstein projection estimators and two robust estimators by \cite{Kato} (beta and gamma) to that of the maximum likelihood estimator for the von Mises distribution with noise contamination ($\mu=\pi/4, \kappa=5,\varepsilon=0.1$). Left: $\mu$, Right: $\kappa$.}
	\label{fig_robust}
\end{figure}

\begin{figure}[htbp]
	\centering
	\begin{tikzpicture}
		\tikzset{every node/.style={}}
		\begin{axis}
			[width=6cm,
			xmax=0.2,xmin=0,
			ymax=2.5,ymin=0.95,
			xlabel={$\epsilon$},
			ylabel={MSE ratio},
			yticklabel style={
				/pgf/number format/.cd,
				fixed,
				zerofill,
				precision=3,
			},
			ylabel near ticks,
			legend pos=north east,
			legend style={legend cell align=left,draw=none,fill=white,fill opacity=0.8,text opacity=1,},
			]
			\addplot[color=black, mark=square*, mark options={solid}] table [x=noise_rate, y expr={pow(10, \thisrow{W1(method2)_mu} - \thisrow{MLE_mu})}] {\robustt};
			\addlegendentry{W1/MLE}
			\addplot[color=black, dashed, mark=*, mark options={solid}] table [x=noise_rate, y expr={pow(10, \thisrow{W2(method3)_mu} - \thisrow{MLE_mu})}] {\robustt};
			\addlegendentry{W2/MLE}
			\addplot[thin, color=black, dotted,] {1};
		\end{axis}
	\end{tikzpicture}
	\begin{tikzpicture}
		\tikzset{every node/.style={}}
		\begin{axis}
			[width=6cm,
			xmax=0.2,xmin=0,
			ymax=2.5,ymin=0,
			xlabel={$\epsilon$},
			yticklabel style={
				/pgf/number format/.cd,
				fixed,
				zerofill,
				precision=3,
			},
			ylabel near ticks,
			legend pos=north east,
			legend style={legend cell align=left,draw=none,fill=white,fill opacity=0.8,text opacity=1,},
			]
			\addplot[color=black, mark=square*, mark options={solid}] table [x=noise_rate, y expr={pow(10, \thisrow{W1(method2)_kappa} - \thisrow{MLE_kappa})}] {\robustt};
			\addlegendentry{W1/MLE}
			\addplot[color=black, dashed, mark=*, mark options={solid}] table [x=noise_rate, y expr={pow(10, \thisrow{W2(method3)_kappa} - \thisrow{MLE_kappa})}] {\robustt};
			\addlegendentry{W2/MLE}
			\addplot[color=black, dotted,] {1};
		\end{axis}
	\end{tikzpicture}
	\caption{Ratio of mean squared error of the $L^1$ and $L^2$ Wasserstein projection estimators to that of the maximum likelihood estimator for the von Mises distribution with noise contamination ($\mu=\pi/4, \kappa=5, n=10^5$). Left: $\mu$, Right: $\kappa$.}
	\label{fig_robust2}
\end{figure}

\subsection{Performance under model misspecification}\label{sec_mis}
Finally, we study the performance of the Wasserstein projection estimators under model misspecification. 
Figures~\ref{fig_vMWC} and \ref{fig_WCvM} compare the Wasserstein projection estimators and the maximum likelihood estimator when the wrapped Cauchy distribution is fitted to samples from the von Mises distribution and vice versa, respectively. 
The estimation accuracy is evaluated by the mean squared error of the circular mean parameter $\mu$, as well as the Kullback--Leibler divergence and the Wasserstein distances ($p=1$ or $p=2$) between the true and plug-in densities. 
See the Appendix for the analytical formulas of the Kullback--Leibler divergence between the von Mises and wrapped Cauchy distributions.

For small $\kappa$ and $\rho$, the distributions are close to uniform, and the three estimators show comparable performance across all metrics. On the other hand, for large $\kappa$ and $\rho$, the distributions become highly peaked, and the three estimators yield distinct results. This discrepancy is inherently tied to the structural differences in their tails, specifically the much heavier tails of the wrapped Cauchy distribution compared to the von Mises distribution. This points to a fundamental peak-tail trade-off: the maximum likelihood estimator prioritizes covering the tails, whereas the $L^1$ Wasserstein projection estimator focuses on matching the peak where the majority of the mass is concentrated. A comprehensive theoretical characterization of this behavior remains an interesting direction for future research.

Note that the simulation results in Sections~\ref{sec_contam} and \ref{sec_mis} should be considered as a starting point for a systematic study of the stability and robustness of Wasserstein projection estimators.

\begin{figure}
	\begin{tikzpicture}
		\tikzset{every node/.style={}}
		\begin{axis}
			[width=6cm,
			xmode=log,
			ymode=log,
			xmax=500,xmin=0.5,
			xlabel={$\kappa$},
			ylabel={MSE of $\mu$},
			ylabel near ticks,
			legend pos=south west,
			legend style={legend cell align=left,draw=none,fill=white,fill opacity=0.8,text opacity=1,},
			]
			\addplot[color=black, mark=square*] table [x=kappa, y=MLE_mu_mse] {\vMWC};
			\addlegendentry{MLE}
			\addplot[color=black, dashed, mark=*] table [x=kappa, y=W1(method2)_mu_mse] {\vMWC};
			\addlegendentry{W1}
			\addplot[color=black, dotted, mark=triangle*] table [x=kappa, y=W2(method3)_mu_mse] {\vMWC};
			\addlegendentry{W2}
		\end{axis}
	\end{tikzpicture}
	\begin{tikzpicture}
		\tikzset{every node/.style={}}
		\begin{axis}
			[width=6cm,
			xmode=log,
			ymode=log,
			xmax=500,xmin=0.5,
			xlabel={$\kappa$},
			ylabel={KL},
			ylabel near ticks,
			legend pos=south east,
			legend style={legend cell align=left,draw=none,fill=white,fill opacity=0.8,text opacity=1,},
			]
			\addplot[color=black, mark=square*] table [x=kappa, y=MLE_KL] {\vMWC};
			\addlegendentry{MLE}
			\addplot[color=black, dashed, mark=*] table [x=kappa, y=W1(method2)_KL] {\vMWC};
			\addlegendentry{W1}
			\addplot[color=black, dotted, mark=triangle*] table [x=kappa, y=W2(method3)_KL] {\vMWC};
			\addlegendentry{W2}
		\end{axis}
	\end{tikzpicture}\\
	\begin{tikzpicture}
		\tikzset{every node/.style={}}
		\begin{axis}
			[width=6cm,
			xmode=log,
			ymode=log,
			xmax=500,xmin=0.5,
			xlabel={$\kappa$},
			ylabel={$W_1$},
			ylabel near ticks,
			legend pos=south east,
			legend style={legend cell align=left,draw=none,fill=white,fill opacity=0.8,text opacity=1,},
			]
			\addplot[color=black, mark=square*] table [x=kappa, y=MLE_W1] {\vMWC};
			\addlegendentry{MLE}
			\addplot[color=black, dashed, mark=*] table [x=kappa, y=W1(method2)_W1] {\vMWC};
			\addlegendentry{W1}
			\addplot[color=black, dotted, mark=triangle*] table [x=kappa, y=W2(method3)_W1] {\vMWC};
			\addlegendentry{W2}
		\end{axis}
	\end{tikzpicture}
	\begin{tikzpicture}
		\tikzset{every node/.style={}}
		\begin{axis}
			[width=6cm,
			xmode=log,
			ymode=log,
			xmax=500,xmin=0.5,
			xlabel={$\kappa$},
			ylabel={$W_2$},
			ylabel near ticks,
			legend pos=south east,
			legend style={legend cell align=left,draw=none,fill=white,fill opacity=0.8,text opacity=1,},
			]
			\addplot[color=black, mark=square*] table [x=kappa, y=MLE_W2] {\vMWC};
			\addlegendentry{MLE}
			\addplot[color=black, dashed, mark=*] table [x=kappa, y=W1(method2)_W2] {\vMWC};
			\addlegendentry{W1}
			\addplot[color=black, dotted, mark=triangle*] table [x=kappa, y=W2(method3)_W2] {\vMWC};
			\addlegendentry{W2}
		\end{axis}
	\end{tikzpicture}
	\caption{Comparison of estimators for the wrapped Cauchy distribution under samples from the von Mises distribution.}
	\label{fig_vMWC}
\end{figure}

\begin{figure}
	\begin{tikzpicture}
		\tikzset{every node/.style={}}
		\begin{axis}
			[width=6cm,
			ymode=log,
			xmax=0.95,xmin=0.05,
			xlabel={$\rho$},
			ylabel={MSE of $\mu$},
			ylabel near ticks,
			legend pos=south west,
			legend style={legend cell align=left,draw=none,fill=white,fill opacity=0.8,text opacity=1,},
			]
			\addplot[color=black, mark=square*] table [x=rho, y=MLE_mu_mse] {\WCvM};
			\addlegendentry{MLE}
			\addplot[color=black, dashed, mark=*] table [x=rho, y=W1(method2)_mu_mse] {\WCvM};
			\addlegendentry{W1}
			\addplot[color=black, dotted, mark=triangle*] table [x=rho, y=W2(method3)_mu_mse] {\WCvM};
			\addlegendentry{W2}
		\end{axis}
	\end{tikzpicture}
	\begin{tikzpicture}
		\tikzset{every node/.style={}}
		\begin{axis}
			[width=6cm,
			ymode=log,
			xmax=0.95,xmin=0.05,
			xlabel={$\rho$},
			ylabel={KL},
			ylabel near ticks,
			legend pos=south east,
			legend style={legend cell align=left,draw=none,fill=white,fill opacity=0.8,text opacity=1,},
			]
			\addplot[color=black, mark=square*] table [x=rho, y=MLE_KL] {\WCvM};
			\addlegendentry{MLE}
			\addplot[color=black, dashed, mark=*] table [x=rho, y=W1(method2)_KL] {\WCvM};
			\addlegendentry{W1}
			\addplot[color=black, dotted, mark=triangle*] table [x=rho, y=W2(method3)_KL] {\WCvM};
			\addlegendentry{W2}
		\end{axis}
	\end{tikzpicture}\\
	\begin{tikzpicture}
		\tikzset{every node/.style={}}
		\begin{axis}
			[width=6cm,
			ymode=log,
			xmax=0.95,xmin=0.05,
			xlabel={$\rho$},
			ylabel={$W_1$},
			ylabel near ticks,
			legend pos=south east,
			legend style={legend cell align=left,draw=none,fill=white,fill opacity=0.8,text opacity=1,},
			]
			\addplot[color=black, mark=square*] table [x=rho, y=MLE_W1] {\WCvM};
			\addlegendentry{MLE}
			\addplot[color=black, dashed, mark=*] table [x=rho, y=W1(method2)_W1] {\WCvM};
			\addlegendentry{W1}
			\addplot[color=black, dotted, mark=triangle*] table [x=rho, y=W2(method3)_W1] {\WCvM};
			\addlegendentry{W2}
		\end{axis}
	\end{tikzpicture}
	\begin{tikzpicture}
		\tikzset{every node/.style={}}
		\begin{axis}
			[width=6cm,
			ymode=log,
			xmax=0.95,xmin=0.05,
			xlabel={$\rho$},
			ylabel={$W_2$},
			ylabel near ticks,
			legend pos=south east,
			legend style={legend cell align=left,draw=none,fill=white,fill opacity=0.8,text opacity=1,},
			]
			\addplot[color=black, mark=square*] table [x=rho, y=MLE_W2] {\WCvM};
			\addlegendentry{MLE}
			\addplot[color=black, dashed, mark=*] table [x=rho, y=W1(method2)_W2] {\WCvM};
			\addlegendentry{W1}
			\addplot[color=black, dotted, mark=triangle*] table [x=rho, y=W2(method3)_W2] {\WCvM};
			\addlegendentry{W2}
		\end{axis}
	\end{tikzpicture}
	\caption{Comparison of estimators for the von Mises distribution under samples from the wrapped Cauchy distribution.}
	\label{fig_WCvM}
\end{figure}

\section{Discussion}
In this study, we examined the performance of the Wasserstein projection estimators for circular distributions.
Numerical results demonstrate that the Wasserstein projection estimators are close to efficient for the von Mises and wrapped Cauchy distributions, which can be viewed as counterparts of the Gaussian distribution on circles.
This result is analogous to the fact that the $L^2$ Wasserstein projection estimator is asymptotically efficient for the one-dimensional Gaussian distribution \cite{AM2022}.
Numerical results also imply that the Wasserstein projection estimators are robust to noise contamination.
It is an interesting future work to establish theoretical supports of these findings from the viewpoint of robust estimation for circular distributions \cite{Ko,Agostinelli,Kato}.
In addition, experiments under model misspecification showed that the performance of the Wasserstein projection estimators depend on the tail behavior of the circular distributions.

Recently, a unified framework of Wasserstein information geometry based on the Wasserstein information matrix has been developed \cite{WIM,Ay,Nishimori,Fukushi}.
The Wasserstein information matrix is a Riemannian metric that gives a local quadratic approximation of the $L^2$ Wasserstein distance \citep{Ay}. 
Thus, the $L^2$ Wasserstein projection estimator in this study is expected to have some relation to this framework, which is an important future problem.
Note that the Wasserstein estimator proposed by \cite{WIM} is defined as a zero point of the Wasserstein score function, which does not coincide with the Wasserstein projection estimator in general.
It is also of interest to investigate the Wasserstein natural gradient method \citep{Chen} for circular distributions.

\section*{Acknowledgements}
We thank the referees for helpful comments.
Takeru Matsuda was supported by JSPS KAKENHI Grant Numbers 21H05205, 22K17865 and 24K02951 and JST Grant Numbers JPMJMS2024 and JPMJAP25B1.

\appendix

\section{Kullback--Leibler divergence between von Mises and wrapped Cauchy distributions}
This appendix presents analytical derivations of the Kullback--Leibler (KL) divergences between the von Mises distribution $P$ and the wrapped Cauchy distribution $Q$ defined by
\begin{align*}
	p(\theta) &= \frac{1}{2\pi I_0(\kappa)} \exp(\kappa \cos(\theta - \mu_P)), \\
	q(\theta) &= \frac{1 - \rho^2}{2\pi (1 + \rho^2 - 2\rho \cos(\theta - \mu_Q))},
\end{align*}
where $\mu_P, \mu_Q \in [-\pi, \pi)$ are the mean directions, $\kappa > 0$ and $\rho \in [0, 1)$ are the concentration parameters, and $I_n(\kappa)$ is the modified Bessel function of the first kind of order $n$.

Using standard integrations, the trigonometric moments of $P$ and $Q$ are calculated as
\begin{align}
	{\rm E}_P[\cos(n(\theta - \mu_P))] &= \frac{I_n(\kappa)}{I_0(\kappa)}, \quad {\rm E}_P[\sin(n(\theta - \mu_P))] = 0, \label{mom_vM} \\
	{\rm E}_Q[\cos(n(\theta - \mu_Q))] &= \rho^n, \quad {\rm E}_Q[\sin(n(\theta - \mu_Q))] = 0, \label{mom_WC}
\end{align}
for $n \ge 1$.

From the Taylor expansion of the complex logarithm, we have
\begin{equation*}
	\log(1 - \rho e^{ix}) = -\sum_{n=1}^\infty \frac{\rho^n}{n} e^{inx}.
\end{equation*}
Adding this to its complex conjugate yields
\begin{equation*}
	\log(1 - \rho e^{ix}) + \log(1 - \rho e^{-ix}) = -2 \sum_{n=1}^\infty \frac{\rho^n}{n} \cos(nx).
\end{equation*}
Since the left-hand side can be combined as
\begin{equation*}
	\log((1 - \rho e^{ix})(1 - \rho e^{-ix})) = \log(1 + \rho^2 - 2\rho\cos x),
\end{equation*}
we obtain
\begin{equation}\label{denom}
	\log(1 + \rho^2 - 2\rho\cos x) = -2 \sum_{n=1}^\infty \frac{\rho^n}{n} \cos(nx).
\end{equation}

\subsection{Formula of $D_{\text{KL}}(P \parallel Q)$}
First, 
\begin{equation*}
	{\rm E}_P[\log p(\theta)] = -\log(2\pi) - \log I_0(\kappa) + \kappa \frac{I_1(\kappa)}{I_0(\kappa)}.
\end{equation*}
Next, by using \eqref{denom},
\begin{align*}
	{\rm E}_P[\log q(\theta)] &= \log(1 - \rho^2) - \log(2\pi) - {\rm E}_P[\log (1 + \rho^2 - 2\rho\cos (\theta-\mu_Q))] \\
	&= \log(1 - \rho^2) - \log(2\pi) + 2 \sum_{n=1}^\infty \frac{\rho^n}{n} {\rm E}_P [\cos(n(\theta - \mu_Q))].
\end{align*}
From \eqref{mom_vM} and \eqref{mom_WC},
\begin{align*}
	&{\rm E}_P [\cos(n(\theta - \mu_Q))] \\
	=& {\rm E}_P [\cos(n(\theta - \mu_P))] \cos(n(\mu_P-\mu_Q)) - {\rm E}_P [\sin(n(\theta - \mu_P))] \sin(n(\mu_P-\mu_Q))  \\
	=& \frac{I_n(\kappa)}{I_0(\kappa)} \cos(n(\mu_P - \mu_Q)).
\end{align*}
Therefore,
\begin{align*}
	D_{\text{KL}}(P \parallel Q) &= {\rm E}_P[\log p(\theta)] - {\rm E}_P[\log q(\theta)] \\
	&= \kappa \frac{I_1(\kappa)}{I_0(\kappa)} - \log I_0(\kappa) - \log(1 - \rho^2) - 2 \sum_{n=1}^\infty \frac{\rho^n I_n(\kappa)}{n I_0(\kappa)} \cos(n(\mu_P - \mu_Q)).
\end{align*}

\subsection{Formula of $D_{\text{KL}}(Q \parallel P)$}
First, by using \eqref{mom_WC} and \eqref{denom},
\begin{align*}
	{\rm E}_Q[\log q(\theta)] &= \log(1 - \rho^2) - \log(2\pi) - {\rm E}_Q[\log(1 + \rho^2 - 2\rho\cos(\theta - \mu_Q))] \\
	&= \log(1 - \rho^2) - \log(2\pi) + 2 \sum_{n=1}^\infty \frac{\rho^{n}}{n} 
	{\rm E}_Q [\cos(\theta - \mu_Q)] \\
	&= \log(1 - \rho^2) - \log(2\pi) + 2 \sum_{n=1}^\infty \frac{\rho^{2n}}{n}  \\
	&= \log(1 - \rho^2) - \log(2\pi) - 2 \log (1-\rho^2)  \\
	& = -\log(1 - \rho^2) - \log(2\pi).
\end{align*}
Next, by using \eqref{mom_WC},
\begin{align*}
	&{\rm E}_Q[\log p(\theta)] \\
	=& -\log(2\pi) - \log I_0(\kappa) + \kappa {\rm E}_Q[\cos(\theta - \mu_P)] \\
	=& -\log(2\pi) - \log I_0(\kappa) + \kappa ({\rm E}_Q[\cos(\theta - \mu_Q)] \cos(\mu_Q-\mu_P)-{\rm E}_Q[\sin(\theta - \mu_Q)] \sin(\mu_Q-\mu_P)) \\
	=& -\log(2\pi) - \log I_0(\kappa) + \kappa \rho \cos(\mu_P-\mu_Q).
\end{align*}
Therefore,
\begin{align*}
	D_{\text{KL}}(Q \parallel P) &= {\rm E}_Q[\log q(\theta)] - {\rm E}_Q[\log p(\theta)] \\
	&= \log I_0(\kappa) - \log(1 - \rho^2) - \kappa \rho \cos(\mu_P - \mu_Q).
\end{align*}

\end{document}